\newtheorem{proposition}{Proposition}
\theoremstyle{remark}
\theoremstyle{definition}
\begin{document}

\title[A criterion for continuity in a parameter of solutions]
{A criterion for continuity in a parameter of solutions to generic boundary-value problems for
higher-order differential systems}


\author[V. Mikhailets]{Vladimir Mikhailets}
\address{Institute of Mathematics, National Academy of
Sciences of Ukraine, 3 Teresh\-chenkivs'ka, Kyiv, 01601, Ukraine;
 National Technical University of Ukraine "Kyiv Polytechnic Institute", 37 Prospect Peremogy, Kyiv, 03056, Ukraine}
\email{mikhailets@imath.kiev.ua}


\author[A. Murach]{Aleksandr Murach}
\address{Institute of Mathematics, National Academy of Sciences of Ukraine,
3 Teresh\-chenkivs'ka, Kyiv, 01601, Ukraine; Chernihiv National
Pedagogical University, 53 Het'mana Polubotka,
 Chernihiv, 14013, Ukraine}
 \email{murach@imath.kiev.ua}


\author[V. Soldatov]{Vitalii Soldatov}
\address{Institute of Mathematics, National Academy of Sciences of Ukraine,
3 Teresh\-chenkivs'ka, Kyiv, 01601, Ukraine}
\email{soldatovvo@ukr.net}

\subjclass[2010]{34B08} 
\date{15/08/2016}
\keywords{Differential system, boundary-value problem, continuity
in parameter.}

\begin{abstract}
We consider the most general class of linear boundary-value problems for ordinary differential systems,
of order $r\geq1$, whose solutions belong to the complex space $C^{(n+r)}$, with $0\leq n\in\mathbb{Z}$.
The boundary conditions can contain derivatives of order $l$, with $r\leq l\leq n+r$, of the solutions.
We obtain a constructive criterion under which the solutions to these problems are continuous with respect
to the parameter in the normed space $C^{(n+r)}$. We also obtain a two-sided estimate for the degree
of convergence of these solutions.
\end{abstract}

\maketitle

\section{Introduction}\label{1}

Questions concerning the validity of passage to the limit in
parameter-dependent Cauchy problems and boundary-value problems
are very important in the theory of ordinary differential
equations. Fundamental results on the continuous in the parameter
of solutions to the Cauchy problem for nonlinear systems were
obtained by Gikhman \cite{Gikhman1952}, Krasnosel'skii and
S.~Krein \cite{KrasnoselskiiKrein1955}, Kurzweil and Vorel
\cite{KurzweilVorel1957CMJ}. These results were refined and
supplemented for linear systems by Levin \cite{Levin1967dan},
Opial \cite{Opial1967}, Reid \cite{Reid1967}, and Nguyen
\cite{NguenTheHoan1993}.

Parameter-dependent boundary-value problems are far less studied than
the Cauchy problem, which is connected with great diversity of
boundary conditions. Kiguradze \cite{Kiguradze1975, Kiguradze1987,
  Kiguradze2003} and Ashordia \cite{Ashordia1996} introduced and
studied a class of general linear boundary-value problems for
systems of first-order ordinary differential equations. The solutions
$y$ to these problems are supposed to be absolutely continuous on a
compact interval $[a,b]$, and the boundary condition is given in the
form $By=q$, where $B:C([a,b],\mathbb{R}^{m})\to\mathbb{R}^{m}$ is an
arbitrary continuous linear operator ($m$ is the number of
differential equations in the system). Kiguradze and Ashordia obtained
conditions under which the solutions to these problems are continuous
in the normed space $C([a,b],\mathbb{R}^{m})$ with respect to the
parameter.  Recently \cite{KodliukMikhailetsReva2013,
  MikhailetsChekhanova2015} these results were refined and extended to
complex-valued functions and systems of higher-order differential
equations.

Other broad classes of linear boundary-value problems were introduced and investigated in \cite{MikhailetsReva2008DAN8, KodliukMikhailets2013JMS, GnypKodlyukMikhailets2015UMJ} and \cite{MikhailetsChekhanova2014DAN7, Soldatov2015UMJ}. These classes  relate closely to the classical scales of the function Sobolev spaces
and the spaces $C^{(l)}$ of continuously differential functions, respectively. The boundary conditions for the problems from these classes are given in the form $By=q$, where $B$ is an arbitrary continuous linear operator that acts from the corresponding function space to the finite-dimensional complex space. It is naturally to call these boundary-value problems generic with respect to the indicated function space. Constructive sufficient conditions for continuity of their solutions in the parameter in the corresponding spaces are obtained. These results were applied to the investigation of multipoint boundary-value problems \cite{Kodliuk2012Dop11}, Green's matrices of boundary-value problems \cite{KodliukMikhailetsReva2013, MikhailetsChekhanova2015}, in the spectral theory of differential operators with distributional coefficients \cite{GoriunovMikhailets2010MN87.2, GoriunovMikhailetsPankrashkin2013EJDE, GoriunovMikhailets2012UMJ63.9}.

In the present paper, we investigate the class of generic boundary-value problems for differential systems of order $r\geq1$ with respect to the complex space $C^{(n+r)}$, where the integer $n\geq0$. This class was introduced in \cite{MikhailetsChekhanova2014DAN7} for $r=1$ and
in \cite{Soldatov2015UMJ} for $r\geq2$. We assume that all coefficients and right-hand sides of these systems belong to the space $C^{(n)}$. Since the solution $z$ to such a system runs through the whole space $(C^{(n+r)})^{m}$ provided that its right-hand side runs through the whole space $(C^{(n)})^m$, the boundary condition is given in the most general form $Bz=q$ where $B:(C^{(n+r)})^{m}\to\mathbb{C}^{rm}$ is an arbitrary continuous linear operator and $q\in\mathbb{C}^{rm}$. This condition covers all the classical types of boundary conditions such as initial conditions in the Cauchy problem, multipoint conditions, integral conditions, and also nonclassical conditions containing the derivatives $z^{(l)}$ with $r\leq l\leq n+r$ of the unknown function.

The sited papers \cite{MikhailetsChekhanova2014DAN7, Soldatov2015UMJ}  provide a constructive sufficient conditions under which the solution to an arbitrary parameter-dependent boundary-value problem from this class depends continuously on the parameter in the normed space $(C^{(n+r)})^{m}$. The goal of the present paper is to show that these conditions are also necessary. Thus, we will prove a criterion for  continuity in the parameter of the solution to this problem in the indicated space. Besides, we will obtain a two-sided estimate for the degree of convergence of the solutions.

The approach developed in this paper can be applied to generic boundary-value problems with respect to other function spaces \cite{GnypMikhailetsMurach2016, MikhailetsMurachSoldatov2016}.

\section{Main results}\label{6sec2}
We arbitrarily choose a compact interval $[a,b]\subset\mathbb{R}$ and integers $n\geq0$, $r\geq1$, and $m\geq1$. We use the Banach spaces
\begin{equation}\label{6.spaces}
(C^{(l)})^{m}:=C^{(l)}([a,b],\mathbb{C}^{m})\quad\mbox{and}
\quad(C^{(l)})^{m\times m}:=C^{(l)}([a,b],\mathbb{C}^{m\times m}),
\end{equation}
with $0\leq l\in\mathbb{Z}$. They consist respectively of all vector-valued functions and $(m\times m)$-matrix-valued functions whose components belong to the space $C^{(l)}:=C^{(l)}([a,b],\mathbb{C})$ of all $l$ times continuously differentiable functions $x:[a,b]\rightarrow\mathbb{C}$. This space is endowed with the norm
$$
\|x\|_{(l)}:=\sum_{j=0}^{l}
\max\bigl\{|x^{(j)}(t)|:\,t\in[a,b]\bigr\}.
$$
The norms in the spaces \eqref{6.spaces} are equal to the sum of the norms in $C^{(l)}$ of all components of the vector-valued or matrix-valued functions and are also denoted by $\|\cdot\|_{(l)}$. It will be always clear from the context to which space (scalar or vector-valued or matrix-valued functions) the norm $\|\cdot\|_{(l)}$ relates.

Let $\varepsilon_0>0$. We consider the following linear boundary-value problem depending on the parameter $\varepsilon\in[0,\varepsilon_0)$:
\begin{gather}\label{6.syste}
L(\varepsilon)z(t,\varepsilon)\equiv z^{(r)}(t,\varepsilon)+
\sum_{j=1}^{r}K_{r-j}(t,\varepsilon)z^{(r-j)}(t,\varepsilon)=
f(t,\varepsilon),\quad a\leq t\leq b,\\
B(\varepsilon)z(\cdot,\varepsilon)=q(\varepsilon).\label{6.kue}
\end{gather}
We suppose for every $\varepsilon\in[0,\varepsilon_0)$ that
$$z(\cdot,\varepsilon)\in(C^{(n+r)})^{m}$$ is an unknown
vector-valued function, whereas the matrix-valued functions
$K_{r-j}(\cdot,\varepsilon)\in(C^{(n)})^{m\times m}$, with
$j\in\{1,\ldots,r\}$, vector-valued function
$f(\cdot,\varepsilon)\in(C^{(n)})^m$, continuous linear operator
\begin{equation}\label{6.Be}
B(\varepsilon):(C^{(n+r)})^m \to \mathbb{C}^{rm}
\end{equation}
and vector $q(\varepsilon)\in \mathbb{C}^{rm}$ are arbitrarily given. In the paper, we interpret vectors as columns.

With this problem, we associate the continuous linear operator
\begin{equation}\label{6.LBe}
(L(\varepsilon),B(\varepsilon)):(C^{(n+r)})^{m}\rightarrow (C^{(n)})^{m}\times\mathbb{C}^{rm}.
\end{equation}
The operator \eqref{6.LBe} is Fredholm with index zero for every
$\varepsilon\in[0,\varepsilon_0)$ because  this operator is a
finite-dimensional perturbation of the isomorphism that acts
between the spaces $(C^{(n+r)})^{m}$ and
$(C^{(n)})^{m}\times\mathbb{C}^{rm}$ and corresponds to the Cauchy
problem for the differential system \eqref{6.syste} (see, e.g.,
\cite{Hermander85}).

For the boundary-value problem \eqref{6.syste}, \eqref{6.kue}, we consider the following three

\medskip

\noindent\textbf{Limit Conditions} as $\varepsilon\to0+$:
\begin{itemize}
  \item [(I)] $K_{r-j}(\cdot,\varepsilon)\to K_{r-j}(\cdot,0)$ in $(C^{(n)})^{m\times m}$ for each $j\in\{1,\ldots,r\}$;
  \item [(II)] $B(\varepsilon)z\to B(0)z$ in $\mathbb{C}^{rm}$ for every $z\in(C^{(n+r)})^m$;
  \item [(III)] $f(\cdot,\varepsilon)\to f(\cdot,0)$ in $(C^{(n)})^{m}$, and $q(\varepsilon)\to q(0)$ in $\mathbb{C}^{rm}$.
\end{itemize}

\smallskip

We also consider

\medskip

\noindent\textbf{Condition (0).} The limiting homogeneous boundary-value problem
\begin{equation}\label{6.LB0}
L(0)\,z(t,0)=0,\quad a\leq t\leq b,\quad\mbox{and}\quad B(0)\,z(\cdot,0)=0
\end{equation}
has only the trivial solution.

\medskip

Note that a criterion for the problem \eqref{6.LB0} satisfy Condition~(0) is given in \cite[Theorem~2]{Soldatov2015UMJ}. This criterion is formulated in terms of the matrices $K_{r-j}(\cdot,0)$ and the operator~$B(0)$.

Now we introduce our

\medskip

\noindent\textbf{Basic Definition.} We say that the solution to the boundary-value problem \eqref{6.syste}, \eqref{6.kue} is continuous in the parameter $\varepsilon$ at $\varepsilon=0$ if the following two conditions are fulfilled:
\begin{itemize}
\item [$(\ast)$] There exists a positive number $\varepsilon_{1}<\varepsilon_{0}$ such that for an arbitrary number $\varepsilon\in\nobreak[0,\varepsilon_{1})$, function $f(\cdot,\varepsilon)\in(C^{(n)})^m$, and vector $q(\varepsilon)\in\mathbb{C}^{rm}$ this problem has a unique solution $z(\cdot,\varepsilon)\in(C^{(n+r)})^{m}$.
\item [$(\ast\ast)$] Limit Condition (III) implies the convergence $z(\cdot,\varepsilon)\to z(\cdot,0)$ in $(C^{(n+r)})^{m}$ as $\varepsilon\to0+$.
\end{itemize}

\noindent\textbf{Main Theorem.} \it The solution to the boundary-value problem \eqref{6.syste}, \eqref{6.kue} is continuous in the parameter $\varepsilon$ at $\varepsilon=0$ if and only if this problem satisfies Condition~\rm(0) \it and Limit Conditions \rm (I) \it and \rm (II)\it.\rm

\medskip

We supplement this result by the following theorem. Let
\begin{equation*}
d_{n}(\varepsilon):=
\|L(\varepsilon)z(\cdot,0)-f(\cdot,\varepsilon)\|_{(n)}+
\|B(\varepsilon)z(\cdot,0)-q(\varepsilon)\|_{\mathbb{C}^{rm}}.
\end{equation*}

\medskip

\noindent\textbf{Estimation Theorem.} \it
Suppose that the boundary-value problem \eqref{6.syste}, \eqref{6.kue} satisfies Condition~\rm(0) \it and Limit Conditions \rm (I) \it and \rm (II)\it. Then there exist positive numbers $\varepsilon_{2}<\varepsilon_{1}$ and $\varkappa_{1}$, $\varkappa_{2}$ such that for every $\varepsilon\in(0,\varepsilon_{2})$ the following two-sided estimate holds:
\begin{equation}\label{6.bound}
\varkappa_{1}\,d_{n}(\varepsilon)\leq
\|z(\cdot,0)-z(\cdot,\varepsilon)\|_{(n+r)}
\leq\varkappa_{2}\,d_{n}(\varepsilon).
\end{equation}
Here, the numbers $\varepsilon_{2}$, $\varkappa_{1}$, and $\varkappa_{2}$ do not depend on $z(\cdot,0)$ and $z(\cdot,\varepsilon)$. \rm

\medskip

According to \eqref{6.bound}, the error and discrepancy of the solution $z(\cdot,\varepsilon)$ to the boundary-value problem \eqref{6.syste}, \eqref{6.kue} are of the same degree provided that we consider $z(\cdot,0)$ as an approximate solution to this problem.

We will prove these theorems in the next section.

Note that every linear continuous operator \eqref{6.Be} can be
uniquely represented in the form
\begin{equation}\label{Be-description}
B(\varepsilon)z=
\sum_{k=1}^{n+r}\beta_{k}(\varepsilon)\,z^{(k-1)}(a)+
\int_a^b(d\Phi(t,\varepsilon))z^{(n+r)}(t),\quad\mbox{with}\quad z\in(C^{(n+r)})^m.
\end{equation}
Here, each $\beta_{k}(\varepsilon)$ is a number $rm\times m$-matrix, and $\Phi(\cdot,\varepsilon)$ is an
$rm\times m$-matrix-valued function formed by scalar functions that are of bounded variation on $[a,b]$,
right-continuous on $(a,b)$, and equal to zero at $t=a$. (Certainly, the integral is understood in the
Riemann-Stieltjes sense.) Representation \eqref{Be-description} follows from the known description of the dual of  $C^{(n+r)}$; see, e.g., \cite[p.~344]{DanfordShvarts1958}. Using this representation, we can reformulate Limit Condition~(II) in an explicit form. Namely, Limit Condition~(II) is equivalent to that the following four conditions are fulfilled as $\varepsilon\to0+$:
\begin{itemize}
  \item [(2a)] $\beta_{k}(\varepsilon)\to\beta_{k}(0)$ for every $k\in\{1,\ldots,n+r\}$;
  \item [(2b)] $\|V_{a}^{b}\Phi(\cdot,\varepsilon)\|_{\mathbb{C}^{rm\times m}}=O(1)$;
  \item [(2c)] $\Phi(b,\varepsilon)\to\Phi(b,0)$;
  \item [(2d)] $\int_{a}^{t}\Phi(s,\varepsilon)ds\to
\int_{a}^{t}\Phi(s,0)ds$ for every $t\in(a,b]$.
\end{itemize}
(Of course, the convergence is considered in $\mathbb{C}^{rm\times m}$.) This equivalence follows from the F.~Riesz criterion of the weak convergence of linear continuous functionals on $C^{(0)}$ (see, e.g., \cite[Ch.~III, Sect.~55]{RieszSz-Nagy56}). It is useful to compare these conditions with the criterion of the norm convergence of the operators $B(\varepsilon)\to B(0)$ as $\varepsilon\to0+$. It asserts that this norm convergence is equivalent to the fulfilment of condition (2a) and
\begin{equation}\label{6.conv-in-variation}
V_{a}^{b}\bigl(\Phi(\cdot,\varepsilon)-\Phi(\cdot,0)\bigr)\to0
\end{equation}
as $\varepsilon\to0+$. Condition \eqref{6.conv-in-variation} is much stronger than the system of conditions (2b)--(2d). Indeed, condition~\eqref{6.conv-in-variation} implies the uniform convergence of the functions $\Phi(t,\varepsilon)$ to $\Phi(t,0)$ on $[a,b]$ as $\varepsilon\to0+$ whereas conditions (2b)--(2d) do not entail the pointwise convergence of these functions at least in one point of $(a,b)$.

\section{Proofs}\label{6sec3}

\begin{proof}[Proof of Main Theorem.]
The sufficiency of Conditions (0), (I) and~(II) for problem \eqref{6.syste}, \eqref{6.kue} to satisfy Basic definition is proved in \cite[Theorem~3]{Soldatov2015UMJ}. Let us prove the necessity. Assume that this problem satisfies Basic definition. Then Condition~(0) is fulfilled. It remains to prove that this problem satisfies Conditions (I) and~(II). We divide our reasoning into three steps.

\emph{Step~$1$.} We will prove here that the boundary-value problem \eqref{6.syste}, \eqref{6.kue} satisfies Limit Condition~(I). In the $r\geq2$ case, we previously reduce this problem to a boundary-value problem for first-order differential system. To this end we put, as usual,
\begin{gather*}
y(\cdot,\varepsilon):=\mathrm{col}\bigl(z(\cdot,\varepsilon),z'(\cdot, \varepsilon),\ldots,z^{(r-1)}(\cdot, \varepsilon)\bigr)\in(C^{(n+1)})^{rm},\\
g(\cdot,\varepsilon):=\mathrm{col}\bigl(0,f(\cdot, \varepsilon)\bigr)
\in(C^{(n)})^{rm},
\end{gather*}
and
\begin{equation*}
A(\cdot,\varepsilon):=\left(
\begin{array}{ccccc}
O_m & I_m & O_m & \ldots & O_m \\
O_m & O_m & I_m & \ldots & O_m \\
\vdots & \vdots & \vdots & \ddots & \vdots \\
O_m & O_m & O_m & \ldots & I_m \\
K_0(\cdot,\varepsilon) & K_1(\cdot,\varepsilon) & K_2(\cdot,\varepsilon) & \ldots & K_{r-1}(\cdot,\varepsilon)\\
\end{array}\right)\in(C^{(n)})^{rm\times rm},
\end{equation*}
with $O_m$ and $I_m$ denoting zero and identity $(m \times m)$-matrices, respectively. In view of representation \eqref{Be-description}, we also put
\begin{equation}\label{6.Be_fo}
N(\varepsilon)y:=\sum_{k=1}^{r-1}\beta_{k}(\varepsilon)y_{k}(a)+
\sum_{k=r}^{n+r}\beta_{k}(\varepsilon)y_{r}^{(k-r)}(a)
+\int_{a}^{b}(d\Phi(t,\varepsilon))y_{r}^{(n+1)}(t)
\end{equation}
for an arbitrary vector-valued function $y=\mathrm{col}(y_{1},\ldots,y_{r})$ with $y_{1},\ldots,y_{r}\in(C^{(n+1)})^{m}$. The linear mapping $y\mapsto Ny$ acts continuously from $(C^{(n+1)})^{rm}$ to $\mathbb{C}^{rm}$.

Evidently, a function $z(\cdot,\varepsilon)\in(C^{(n+r)})^{m}$ is a solution to the boundary-value problem \eqref{6.syste}, \eqref{6.kue} if and only if the function $y(\cdot,\varepsilon)$ is a solution to the boundary-value problem
\begin{gather}\label{6.fo}
y'(t,\varepsilon)+A(t,\varepsilon)y(t,\varepsilon)=g(t,\varepsilon),\quad a\leq t\leq b,\\
N(\varepsilon)y(\cdot,\varepsilon)=q(\varepsilon). \label{6.BC_fo}
\end{gather}
In the $r=1$ case, we put $y(\cdot,\varepsilon):=z(\cdot,\varepsilon)$, $g(\cdot,\varepsilon):=f(\cdot,\varepsilon)$,  $A(\cdot,\varepsilon):=A_{0}(\cdot,\varepsilon)$, and $N(\varepsilon):=B(\varepsilon)$ for the sake of uniformity in notation on Step~1, so the problem \eqref{6.syste}, \eqref{6.kue} coincides with the problem \eqref{6.fo}, \eqref{6.BC_fo}.

According to condition~($\ast$) of Basic Definition, the latter
problem has a unique solution $y(t,\varepsilon)$ for every
$\varepsilon\in[0,\varepsilon_{1})$. Limit Condition~(I) is
equivalent to that $A(\cdot,\varepsilon)\to A(\cdot,0)$ in
$(C^{(n)})^{rm\times rm}$ as $\varepsilon\to0+$. Let us prove this
convergence.

We previously note the following. If $g(\cdot,\varepsilon)$ and
$q(\varepsilon)$ are independent of $\varepsilon\in[0,\varepsilon_{1})$, then $z(\cdot,\varepsilon)\to z(\cdot,0)$ in $(C^{(n+r)})^{m}$ as $\varepsilon\to0+$ by condition~($\ast\ast$) of Basic Definition. The latter convergence is equivalent to that $y(\cdot,\varepsilon)\to y(\cdot,0)$ in $(C^{(n+1)})^{rm}$ as $\varepsilon\to0+$.

Given $\varepsilon\in[0,\varepsilon_{1})$, we now consider the matrix boundary-value problem
\begin{gather}\label{6.matrix-eq}
Y'(t,\varepsilon)+A(t,\varepsilon)Y(t,\varepsilon)=0_{rm},
\quad a\leq t\leq b,\\
[N(\varepsilon)Y(\cdot,\varepsilon)]=I_{rm} \label{6.matrix-bound-cond}.
\end{gather}
Here, the unknown matrix-valued function $Y(\cdot,\varepsilon):=(y_{j,k}(\cdot,\varepsilon))_{j,k=1}^{rm}$ belongs to the space $(C^{(n+1)})^{rm\times rm}$, and we put
$$
[N(\varepsilon)Y(\cdot,\varepsilon)]:=\left(N(\varepsilon)
\begin{pmatrix}
y_{1,1}(\cdot,\varepsilon)\\
\vdots \\
y_{rm,1}(\cdot,\varepsilon)\\
\end{pmatrix}
\;\ldots\;
N(\varepsilon)\begin{pmatrix}
y_{1,rm}(\cdot,\varepsilon)\\
\vdots \\
y_{rm,rm}(\cdot,\varepsilon)\\
\end{pmatrix}\right).
$$
This problem is a union of $rm$ boundary-value problems \eqref{6.fo}, \eqref{6.BC_fo} whose right-hand sides are independent of $\varepsilon$. So, it has a unique solution $Y(\cdot,\varepsilon)\in(C^{(n+1)})^{rm\times rm}$, and $Y(\cdot,\varepsilon)\to Y(\cdot,0)$ in $(C^{(n+1)})^{rm\times rm}$ as $\varepsilon\to0+$. Besides, $\det Y(t,\varepsilon)\neq0$ for every $t\in[a,b]$. Indeed, if this were wrong, the function columns of the matrix $Y(\cdot,\varepsilon)$ would be linearly dependent, which would contradict the boundary condition \eqref{6.matrix-bound-cond}. Thus, we obtain the required convergence
\begin{equation*}
A(\cdot,\varepsilon)=-Y'(\cdot,\varepsilon)(Y(\cdot,\varepsilon))^{-1}\to
-Y'(\cdot,0)(Y(\cdot,0))^{-1}=A(\cdot,0)
\end{equation*}
in $(C^{(n)})^{rm\times rm}$ as $\varepsilon\to0+$. It implies that
\begin{equation}\label{6.bound-norm-A}
\|K_{r-j}(\varepsilon)\|_{(n)}=O(1)\quad\mbox{as}\quad\varepsilon\to0+
\quad\mbox{for each}\quad j\in\{1,\ldots,r\}.
\end{equation}

\emph{Step~$2$.} Let us prove that $\|B(\varepsilon)\|=O(1)$ as $\varepsilon\to0+$; here, $\|\cdot\|$ denotes the norm of a bounded operator from $(C^{(n+r)})^m$ to $\mathbb{C}^{rm}$. Suppose the contrary; then there exists a sequence $(\varepsilon^{(k)})_{k=1}^{\infty}\subset(0,\varepsilon_{1})$ such that $\varepsilon^{(k)}\to0$ and $0<\|B(\varepsilon^{(k)})\|\to\infty$ as $\varepsilon\to0+$. For every integer $k\geq1$, we can choose a function $w_{k}\in(C^{(n+r)})^{m}$ such that $\|w_{k}\|_{(n+r)}=1$ and $\|B(\varepsilon^{(k)})w_{k}\|_{\mathbb{C}^{rm}}\geq
\|B(\varepsilon^{(k)})\|/2$. We now let $z(\cdot,\varepsilon^{(k)}):=
\|B(\varepsilon^{(k)})\|^{-1}\,w_{k}$, $f(\cdot,\varepsilon^{(k)}):=
L(\varepsilon^{(k)})\,z(\cdot,\varepsilon^{(k)})$, and $q(\varepsilon^{(k)}):=B(\varepsilon^{(k)})\,z(\cdot,\varepsilon^{(k)})$.
Since $z(\cdot,\varepsilon^{(k)})\to0$ in $(C^{(n+r)})^{m}$ as $k\to\infty$, we get $f(\cdot,\varepsilon^{(k)})\to0$ in $(C^{(n)})^{m}$ because the boundary-value problem \eqref{6.syste}, \eqref{6.kue} satisfies Limit Condition~(I) as we have proved on step~1. Since $1/2\leq\|q(\varepsilon^{(k)})\|_{\mathbb{C}^{rm}}\leq1$ for every $k$, there exists  a subsequence $(q(\varepsilon^{(k_p)}))_{p=1}^{\infty}\subset
(q(\varepsilon^{(k)}))_{k=1}^{\infty}$ and a nonzero vector $q(0)\in\mathbb{C}^{rm}$ such that $q(\varepsilon^{(k_p)})\to q(0)$ in $\mathbb{C}^{rm}$ as $p\to\infty$.

Thus, for every integer $p\geq1$, the function $z(\cdot,\varepsilon^{(k_p)})\in(C^{(n+r)})^{m}$ is a unique solution to the boundary-value problem
\begin{gather*}
L(\varepsilon^{(k_p)})\,z(t,\varepsilon^{(k_p)})=f(t,\varepsilon^{(k_p)}),
\quad a\leq t\leq b,\\
B(\varepsilon^{(k_p)})\,z(\cdot,\varepsilon^{(k_p)})=q(\varepsilon^{(k_p)}).
\end{gather*}
Since $f(\cdot,\varepsilon^{(k_p)})\to0$ in $(C^{(n)})^{m}$ and $q(\varepsilon^{(k_p)})\to q(0)$ in $\mathbb{C}^{rm}$ as $p\to\infty$, it follows from condition $(\ast\ast)$ of Basic Definition that
the function $z(\cdot,\varepsilon^{(k_p)})$ converges to the unique solution $z(\cdot,0)$ of the limiting boundary-value problem
\begin{equation*}
L(0)z(t,0)=0,\quad a\leq t\leq b,\qquad\mbox{and}\qquad
B(0)z(\cdot,0)=q(0).
\end{equation*}
This convergence holds in the space $(C^{(n+r)})^{m}$ as $\varepsilon\to0+$. But recall that $z(\cdot,\varepsilon^{(k_p)})\to0$ in the same space. Therefore $z(\cdot,0)\equiv0$, which contradicts the boundary condition $B(0)z(\cdot,0)=q(0)$ with $q(0)\neq0$. Thus, our assumption is wrong, and we have thereby proved that $\|B(\varepsilon)\|=O(1)$ as $\varepsilon\to0+$.

\emph{Step~$3$.} We can now prove that the boundary-value problem \eqref{6.syste}, \eqref{6.kue} satisfies Limit Condition~(II). Owing to \eqref{6.bound-norm-A} and Step~2, there exist numbers $\varkappa'>0$ and $\varepsilon'\in(0,\varepsilon_{1})$ such that $\|(L(\varepsilon),B(\varepsilon))\|\leq\varkappa'$ for each $\varepsilon\in[0,\varepsilon')$. Here, $\|(L(\varepsilon),B(\varepsilon))\|$ is the norm of the bounded operator \eqref{6.LBe}. We choose a function $z\in(C^{(n+r)})^{m}$ arbitrarily and then put $f(\cdot,\varepsilon):=L(\varepsilon)z$ and $q(\varepsilon):=B(\varepsilon)z$ for each $\varepsilon\in[0,\varepsilon')$. Thus,
\begin{equation*}
z=(L(\varepsilon),B(\varepsilon))^{-1}
(f(\cdot,\varepsilon),q(\varepsilon))\quad\mbox{for every}\quad
\varepsilon\in[0,\varepsilon').
\end{equation*}
Here, of course, $(L(\varepsilon),B(\varepsilon))^{-1}$ denotes the inverse of \eqref{6.LBe}. (The operator \eqref{6.LBe} is invertible due to condition ($\ast$) of Basic Definition.) Then, whenever $0<\varepsilon<\varepsilon'$, we have
\begin{align*}
&\bigl\|B(\varepsilon)z-B(0)z\bigr\|_{\mathbb{C}^{rm}}\leq
\bigl\|(f(\cdot,\varepsilon),q(\varepsilon))-
(f(\cdot,0),q(0))\bigr\|_{(C^{n})^{m}\times\mathbb{C}^{rm}}
\\
 & \qquad =\bigl\|(L(\varepsilon),B(\varepsilon))
(L(\varepsilon),B(\varepsilon))^{-1}
\bigl((f(\cdot,\varepsilon),q(\varepsilon))-
(f(\cdot,0),q(0))\bigr)\bigr\|_{(C^{(n)})^{m}\times\mathbb{C}^{rm}}
\\
&  \qquad
\leq\varkappa'\,\bigl\|(L(\varepsilon),B(\varepsilon))^{-1}
\bigl((f(\cdot,\varepsilon),q(\varepsilon))-
(f(\cdot,0),q(0))\bigr)\bigr\|_{(n+r)}
\\
& \qquad  =\varkappa'\,\bigl\|(L(0),B(0))^{-1}(f(\cdot,0),q(0))-
(L(\varepsilon),B(\varepsilon))^{-1}(f(\cdot,0),q(0))\bigr\|_{(n+r)}
\to0
\end{align*}
as $\varepsilon\to0+$ due to condition ($\ast\ast$) of Basic Definition. Since $y\in(C^{(n+r)})^{m}$ is arbitrary, we have proved that the boundary-value problem \eqref{6.syste}, \eqref{6.kue} satisfies Limit Condition~(II).
\end{proof}

\begin{proof}[Proof of Estimation Theorem]
We will first prove the left-hand part of~\eqref{6.bound}. Limit Conditions (I) and (II) yield the strong convergence of $(L(\varepsilon),B(\varepsilon))$ to $(L(0),B(0))$ as $\varepsilon\to0+$. Recall that we consider $(L(\varepsilon),B(\varepsilon))$, with $0\leq\varepsilon<\varepsilon_{0}$, as a bounded operator from
$(C^{(n+r)})^{m}$ to $(C^{(n)})^{m}\times\mathbb{C}^{rm}$. Hence, there exist numbers $\varkappa'>0$ and $\varepsilon'\in(0,\varepsilon_{1})$ such that the operator norm $\|(L(\varepsilon),B(\varepsilon))\|\leq\varkappa'$ for each $\varepsilon\in[0,\varepsilon')$. Indeed, if the contrary were true, we would obtain a sequence of positive numbers $(\varepsilon^{(k)})_{k=1}^{\infty}$ such that $\varepsilon^{(k)}\to0$ and $\|(L(\varepsilon^{(k)}),B(\varepsilon^{(k)}))\|\to\infty$ as $k\to\infty$, which would contradict the above strong convergence in view of the Banach-Steinhaus theorem. We see now that the left-hand part of the two-sided estimate \eqref{6.bound} holds for each $\varepsilon\in(0,\varepsilon')$, with $\varkappa_{1}:=1/\varkappa'$.

Let us prove the right-hand part of this estimate. According to Main Theorem, the boundary-value problem \eqref{6.syste}, \eqref{6.kue} satisfies Basic Definition. Therefore operator \eqref{6.LBe} is invertible for every $\varepsilon\in[0,\varepsilon_1)$, and, moreover, its inverse $(L(\varepsilon),B(\varepsilon))^{-1}$ converges strongly to $(L(0),B(0))^{-1}$ as $\varepsilon\to0+$. Indeed, for arbitrary $f\in(C^{n})^{m}$ and $q\in\mathbb{C}^{rm}$, we conclude by condition $(\ast\ast)$ of Basic Definition that
\begin{equation*}
(L(\varepsilon),B(\varepsilon))^{-1}(f,q)=:z(\cdot,\varepsilon)\to
z(\cdot,0):=(L(0),B(0))^{-1}(f,q)
\end{equation*}
in $(C^{(n+r)})^{m}$ as $\varepsilon\to0+$. Hence, there exists  positive numbers $\varepsilon_{2}<\varepsilon'$ and $\varkappa_{2}$ such that the norm of the inverse operator $\|(L(\varepsilon),B(\varepsilon))^{-1}\|\leq\varkappa_{2}$ for each $\varepsilon\in[0,\varepsilon_{2})$.  This follows from the Banach-Steinhaus theorem in a way analogous to that used above. We now see that the right-hand part of \eqref{6.bound} is true for  $\varepsilon\in(0,\varepsilon_{2})$.
\end{proof}

\section{Application}

Let us give an application of Main Theorem to multipoint
boundary-value problems. For the parameter-dependent differential
system \eqref{6.syste}, we pose the multipoint boundary condition
\begin{equation}\label{6.mp.kue}
B(\varepsilon)z(\cdot,\varepsilon)\equiv
\sum_{j=0}^{p}\sum_{k=1}^{\omega_j}
\sum_{l=0}^{n+r}{\alpha_{j,k}^{(l)}(\varepsilon)
z^{(l)}(t_{j,k}(\varepsilon),\varepsilon)}=q(\varepsilon).
\end{equation}
Here, the integers $p\geq1$ and $\omega_j\geq1$, matrixes
$\alpha_{j,k}^{(l)}(\varepsilon)\in\mathbb{C}^{rm\times m}$, and
points  $t_{j,k}(\varepsilon)\in[a,b]$ are arbitrarily chosen for
all admissible values of $j$, $k$, $l$, and $\varepsilon$. As
above, $q(\varepsilon)\in\mathbb{C}^{rm}$. In~\eqref{6.mp.kue}, we
use the summation over the two indexes $j$ and $k$ instead of one
index because the points $t_{j,k}(\varepsilon)$, with $j\geq1$,
will be supposed to converge to a certain point $t_{j}$ as
$\varepsilon\to0+$, whereas the points $t_{0,k}(\varepsilon)$ will
not be supposed to have a limit as $\varepsilon\to0+$. So, we
divide the set of all points $t_{j,k}(\varepsilon)$ into $p+1$
sets $\{t_{j,k}(\varepsilon):k=1,\ldots,\omega_j\}$, with
$j=0,1,\ldots,p$, in accordance with their behavior as
$\varepsilon\to0+$.

In this connection, we consider the following limiting
boundary-value problem:
\begin{gather}
L(0)z(t,0)=f(\cdot,0),\quad a\leq t\leq b,\label{6.mp.L0}\\
B(0)z(\cdot,0)\equiv\sum_{j=1}^{p}
\sum_{l=0}^{n+r}{\alpha_{j}^{(l)}z^{(l)}(t_{j},0)}=q(0).
\label{6.mp.lkue}
\end{gather}
Here, $\alpha_{j}^{(l)}\in\mathbb{C}^{rm\times m}$,
$t_{j}\in[a,b]$ for all admissible values of $j$ and $l$, and as
above, $q(0)\in\mathbb{C}^{rm}$.

It is evident that, for every $\varepsilon\in[0,\varepsilon_0)$,
the linear mapping $z(\cdot,\varepsilon)\mapsto
B(\varepsilon)z(\cdot,\varepsilon)$ is a continuous operator from
$(C^{(n+r)})^m$ to $\mathbb{C}^{rm}$. Thus, the multipoint
boundary-value problem \eqref{6.syste}, \eqref{6.mp.kue} is
generic with respect to the space $(C^{(n+r)})^m$ for every
$\varepsilon\in(0,\varepsilon_0)$ as well as the the limiting
problem \eqref{6.mp.L0}, \eqref{6.mp.lkue}.

For these boundary-value problems, we can give constructive
sufficient conditions for the convergence $B(\varepsilon)z\to
B(0)z$ in $\mathbb{C}^{rm}$ for every $z\in(C^{(n+r)})^m$. Note
that it is scarcely possible to use the system of conditions
(2a)--(2d) for the verification of this convergence because it is
difficult in practice to find the function
$\Phi(\cdot,\varepsilon)$ in the canonical
representation~\eqref{Be-description} of the multipoint boundary
operator $B(\varepsilon)$.

\medskip

\noindent \textbf{Theorem A1.} \it  Suppose that the
boundary-value problem \eqref{6.syste}, \eqref{6.mp.kue} satisfies
the following conditions as $\varepsilon \to 0+$:
\begin{itemize}
    \item[\textup{(d1)}] $t_{j,k}(\varepsilon)\to t_{j}$ for all $j\in\{1,\ldots,p\}$ and $k\in\{1,\ldots,\omega_j\}$;
    \item[\textup{(d2)}] $\sum_{k=1}^{\omega_j}
        \alpha_{j,k}^{(l)}(\varepsilon)\to\alpha_{j}^{(l)}$ for all $j\in\{1,\ldots,p\}$ and $l\in\{0,\ldots,n+r\}$;
    \item[\textup{(d3)}] $\alpha_{j,k}^{(n+r)}(\varepsilon)=O(1)$ for all $j\in\{1,\ldots,p\}$ and $k\in\{1,\ldots,\omega_j\}$;
    \item[\textup{(d4)}] $\|\alpha_{j,k}^{(l)}(\varepsilon)\|_{\mathbb{C}^{rm\times m}}
        \!\cdot\!|t_{j,k}(\varepsilon)-t_{j}|\to0$ for all $j\in\{1,\ldots,p\}$, $k\in\{1,\ldots,\omega_j\}$, and $l\in\{0,\ldots,n+r-1\}$;
    \item[\textup{(d5)}] $\alpha_{0,k}^{(l)}(\varepsilon)\to0$ for all $k\in\{1,\ldots,\omega_0\}$ and $l\in\{0,\cdots,n+r\}$.
\end{itemize}
\noindent Then this problem satisfies Limit Condition
\textup{(II)}.\rm

\medskip

Owing to Main Theorem and Theorem A1, we immediately obtain the
following result:

\medskip

\noindent \textbf{Theorem A2.} \it Assume that the multipoint
boundary-value problem \eqref{6.syste}, \eqref{6.mp.kue} satisfies
Limit Condition \textup{(I)} and conditions
\textup{(d1)}--\textup{(d5)} and that the limiting problem
\eqref{6.mp.L0}, \eqref{6.mp.lkue} satisfies Condition~$(0)$. Then
the solution to the problem \eqref{6.syste}, \eqref{6.mp.kue} is
continuous in the parameter $\varepsilon$ at $\varepsilon=0$. \rm

\begin{proof}[Proof of Theorem \textup{A.1}] Let $\|\cdot\|$ denote the norm of a number vector or matrix, this norm being the sum of the absolute values of all their elements. For an arbitrary function $z\in(C^{(n+r)})^m$ and sufficiently small $\varepsilon>0$, we have the inequality
\begin{equation}\label{6.mp.eq1}
\begin{aligned}
\|B(\varepsilon)z-B(0)z\|\leq &\sum_{k=1}^{\omega_0}
\sum_{l=0}^{n+r} \|\alpha_{0,k}^{(l)}(\varepsilon)\|\cdot
\|z^{(l)}(t_{0,k}(\varepsilon))\|\\
&+\sum_{j=1}^{p}\sum_{l=0}^{n+r}\,
\left\|\sum_{k=1}^{\omega_j}\alpha_{j,k}^{(l)}(\varepsilon)
z^{(l)}(t_{j,k}(\varepsilon))-\alpha_{j}^{(l)}z^{(l)}(t_{j})\right\|.
\end{aligned}
\end{equation}
Here, by condition (d5), we can write
\begin{equation}\label{6.mp.eq22}
\begin{gathered}
\|\alpha_{0,k}^{(l)}(\varepsilon)\|\cdot
\|z^{(l)}(t_{0,k}(\varepsilon))\|\leq
\|\alpha_{0,k}^{(l)}(\varepsilon)\|\cdot \|z\|_{(n+r)}\to0
\end{gathered}
\end{equation}
for all admissible values of $k$ and $l$. This and the other
limits in the proof are considered as $\varepsilon\to0+$. Besides,
\begin{equation}\label{6.mp.eq23}
\begin{aligned}
&\left\|\sum_{k=1}^{\omega_j}
\alpha_{j,k}^{(l)}(\varepsilon)z^{(l)}(t_{j,k}(\varepsilon))
-\alpha_{j}^{(l)}z^{(l)}(t_{j})\right\|\\
&\leq\left\|\sum_{k=1}^{\omega_j}
\alpha_{j,k}^{(l)}(\varepsilon)\cdot
\left(z^{(l)}(t_{j,k}(\varepsilon))-z^{(l)}(t_{j})\right)\right\|
+\left\|\left(\sum_{k=1}^{\omega_j}
\alpha_{j,k}^{(l)}(\varepsilon)-\alpha_{j}^{(l)}\right)\cdot
z^{(l)}(t_{j})\right\|\\ &\leq\sum_{k=1}^{\omega_j}
\|\alpha_{j,k}^{(l)}(\varepsilon)\|\cdot
\|z^{(l)}(t_{j,k}(\varepsilon))-z^{(l)}(t_{j})\|
+\left\|\sum_{k=1}^{\omega_j}
\alpha_{j,k}^{(l)}(\varepsilon)-\alpha_{j}^{(l)}\right\|\cdot
\|z\|_{(n+r)}.
\end{aligned}
\end{equation}
Here, according to condition~(d2), we have
\begin{equation}\label{6.mp.eq24}
\left\|\sum_{k=1}^{\omega_j}
\alpha_{j,k}^{(l)}(\varepsilon)-\alpha_{j}^{(l)}\right\|\cdot
\|z\|_{(n+r)}\to0.
\end{equation}
Moreover,
\begin{equation}\label{6.mp.eq2}
\sum_{k=1}^{\omega_j}
\|\alpha_{j,k}^{(l)}(\varepsilon)\|\cdot\|
z^{(l)}(t_{j,k}(\varepsilon))-z^{(l)}(t_{j})\,\|\to0.
\end{equation}
Indeed, if $l=n+r$, then formula \eqref{6.mp.eq2} is a direct
consequence of conditions (d1), (d3) and the continuity of
$z^{(l)}$. If $l \leq n+r-1$, then this formula is a consequence
of the Lagrange theorem of the mean and condition~(d4), namely
\begin{equation*}
\sum_{k=1}^{\omega_j}
\|\alpha_{j,k}^{(l)}(\varepsilon)\|\cdot\|
z^{(l)}(t_{j,k}(\varepsilon))-z^{(l)}(t_{j})\,\| \leq\|z\|_{(n+r)}
\sum_{k=1}^{\omega_j}
\|\alpha_{j,k}^{(l)}(\varepsilon)\|\cdot
|t_{j,k}(\varepsilon)-t_{j}|\to0.
\end{equation*}

According to \eqref{6.mp.eq23}--\eqref{6.mp.eq2}, we have the
convergence
\begin{equation}\label{6.mp.eq25}
\left\|\sum_{k=1}^{\omega_j}
\alpha_{j,k}^{(l)}(\varepsilon)z^{(l)}(t_{j,k}(\varepsilon))
-\alpha_{j}^{(l)}z^{(l)}(t_{j})\right\|\to0.
\end{equation}
Now formulas \eqref{6.mp.eq1}, \eqref{6.mp.eq22}, and
\eqref{6.mp.eq25} imply that $\|B(\varepsilon)z-B(0)z\|\to0$, i.e.
Limit Condition~(II) is satisfied.
\end{proof}

\section{Concluding remarks}

It is easy to check that conditions $(\ast)$ and $(\ast\ast)$ in Basic Definition are separately equivalent to the corresponding ones:
\begin{itemize}
\item [$(\star)$] there exists a positive number $\varepsilon_1<\varepsilon_0$ such that the operator \eqref{6.LBe} is invertible for every $\varepsilon\in[0,\varepsilon_1)$;
\item [$(\star\star)$] the operator $(L(\varepsilon),B(\varepsilon))^{-1}$, which is inverse of \eqref{6.LBe}, converges to $(L(0),B(0))^{-1}$ as $\varepsilon\to\infty$ in the strong operator topology.
\end{itemize}

It is also not difficult to prove that the system of Limit Conditions (I) and (II) is equivalent to

\medskip

\noindent\textbf{Limit Condition (IV).} The operator $(L(\varepsilon),B(\varepsilon))$ converges to $(L(0),B(0))$ as $\varepsilon\to\infty$ in the strong operator topology.

\medskip

So, Main Theorem implies that, the equivalence
\begin{equation*}
\bigl(\mbox{Limit Condition (IV)}\bigr)\;\Leftrightarrow\; \bigl((\star)\wedge(\star\star)\bigr)
\end{equation*}
holds true under Condition~(0).

This is a surprising result at first sight. Indeed, if $X$ and $Y$ are arbitrary infinite-dimensional Banach spaces with Schauder basis, then the set of all irreversible operators is sequentially dense in $\mathcal{L}(X,Y)$ in the strong operator topology; here, $\mathcal{L}(X,Y)$ stands for the linear space of all continuous linear operators from $X$ to $Y$. Moreover, the mapping $\mathrm{inv}:T\mapsto T^{-1}$ given on the set of all invertible operators $T\in\mathcal{L}(X,Y)$ is everywhere discontinuous in this topology. We will prove these propositions in Appendix.

\section*{Appendix}

Let $X$ and $Y$ are complex or real Banach spaces. Assume that at least one of the spaces $X$ and $Y$ has Schauder basis. Specifically, we can put $X:=(C^{(n+r)})^{m}$ and $Y:=(C^{(n)})^{m}\times\mathbb{C}^{rm}$.

\begin{proposition}\label{prop1}
The set of all finite-dimensional operators from $\mathcal{L}(X,Y)$ is sequentially dense in $\mathcal{L}(X,Y)$ in the strong operator topology.
\end{proposition}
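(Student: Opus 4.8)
The plan is to use the canonical finite-rank partial-sum projections associated with a Schauder basis. Suppose first that $X$ has a Schauder basis $(e_{k})_{k=1}^{\infty}$ with coordinate functionals $(e_{k}^{*})_{k=1}^{\infty}$, and let $P_{n}\colon X\to X$ denote the partial-sum projection $P_{n}x:=\sum_{k=1}^{n}e_{k}^{*}(x)\,e_{k}$. I would recall two standard facts: the $P_{n}$ are uniformly bounded, $\sup_{n}\|P_{n}\|<\infty$ (the basis constant, by the uniform boundedness principle), and $P_{n}x\to x$ in $X$ for every $x\in X$. Given an arbitrary $T\in\mathcal{L}(X,Y)$, I would then set $T_{n}:=TP_{n}$. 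Each $T_{n}$ has rank at most $n$, hence is finite-dimensional, and for every $x\in X$ we have $T_{n}x=T(P_{n}x)\to Tx$ in $Y$ by continuity of $T$; thus $T_{n}\to T$ in the strong operator topology, which yields the claim in this case.

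If instead $Y$ has a Schauder basis, I would argue symmetrically with the partial-sum projections $Q_{n}\colon Y\to Y$, which are again uniformly bounded and satisfy $Q_{n}y\to y$ for every $y\in Y$. Now put $T_{n}:=Q_{n}T$; each $T_{n}$ is finite-dimensional (its range lies in the linear span of the first $n$ basis vectors of $Y$), and $T_{n}x=Q_{n}(Tx)\to Tx$ for every $x\in X$, so again $T_{n}\to T$ strongly. Since by hypothesis at least one of $X$, $Y$ carries a Schauder basis, one of these two constructions applies, and in either case we have exhibited an explicit sequence of finite-dimensional operators converging to $T$ in the strong operator topology; as $T$ was arbitrary, this proves sequential density (and in fact density, the strong operator topology being non-metrizable in general).

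There is no real obstacle here: the only points that need care are the bookkeeping of composing the projections on the correct side according to which of the two spaces carries the basis, and the observation that the uniform boundedness of the basis projections, although classical and true, is not actually needed for the strong convergence of $T_{n}$ to $T$. In the concrete situation $X=(C^{(n+r)})^{m}$, $Y=(C^{(n)})^{m}\times\mathbb{C}^{rm}$ indicated before the statement, the scalar space $C^{(l)}([a,b],\mathbb{C})$ has a Schauder basis (obtained from the Faber--Schauder basis of $C^{(0)}$ by $l$-fold integration), so the first construction applies directly.
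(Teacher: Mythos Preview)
Your proof is correct and follows essentially the same approach as the paper: in both cases one composes $T$ with the partial-sum projections of the Schauder basis, on the left or the right according to which space carries the basis, and observes that the resulting finite-rank operators converge strongly to $T$. Your additional remarks on the basis constant and the concrete Faber--Schauder basis are accurate but not used in the paper's argument.
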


Since every finite-dimensional operator from $\mathcal{L}(X,Y)$ is irreversible, this proposition implies that the set of all irreversible operators from $\mathcal{L}(X,Y)$ is sequentially dense in $\mathcal{L}(X,Y)$ in the strong operator topology.

\begin{proof}[Proof of Proposition $\ref{prop1}$]
We apart examine the cases where $X$ or $Y$ has Schauder basis.

Consider first the case where $X$ has a certain Schauder basis $(e_{k})_{k=1}^{\infty}$. Then every vector $x\in X$ admits the unique representation
\begin{equation}\label{append-f1}
x=\sum_{k=1}^{\infty}e_{k}^{\ast}(x)\,e_{k},
\end{equation}
where each $e_{k}^{\ast}$ is a certain continuous linear functional on $X$. Using this basis, we define the operator of the $n$-th partial sum \begin{equation*}
S_{n}:x\mapsto\sum_{k=1}^{n}e_{k}^{\ast}(x)\,e_{k},\quad\mbox{with}\quad x\in X.
\end{equation*}
It is evident that $S_{n}\rightarrow I_{X}$ as $n\to\infty$ in the strong operator topology in $\mathcal{L}(X,X)$; here, as usual, $I_{X}$ denotes the identical operator on $X$. Now, given an operator $T\in\mathcal{L}(X,Y)$, we put $T_{n}:=TS_{n}$ for each integer $n\geq1$. Then every operator $T_{n}$ is finite-dimensional, and $T_{n}\to T$ as $n\to\infty$ in the strong operator topology in $\mathcal{L}(X,Y)$.

The case where $Y$ has Schauder basis is examined similarly. Namely, let $(\tilde{e}_{k})_{k=1}^{\infty}$ be a Schauder basis of $Y$, and let $\tilde{e}_{k}^{\ast}(y)$ be the $k$-th coordinate of $y\in Y$ in this basis. For every integer $n\geq1$, we define the operator
\begin{equation*}
\widetilde{S}_{n}:y\mapsto
\sum_{k=1}^{n}\tilde{e}_{k}^{\ast}(y)\,\tilde{e}_{k},
\quad\mbox{with}\quad y\in Y.
\end{equation*}
Given an operator $T\in\mathcal{L}(X,Y)$, we put $\widetilde{T}_{n}:=\widetilde{S}_{n}T$ and conclude that $\widetilde{T}_{n}$ is finite-dimensional and that $\widetilde{T}_{n}\to T$ as $n\to\infty$ in the strong operator topology in $\mathcal{L}(X,Y)$.
\end{proof}

\begin{proposition}\label{prop2}
Assume that the spaces $X$ and $Y$ are isomorphic. Then the mapping $\mathrm{inv}:T\mapsto T^{-1}$ given on the set of all invertible operators $T\in\mathcal{L}(X,Y)$ is everywhere discontinuous in the strong operator topology.
\end{proposition}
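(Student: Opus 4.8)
The plan is to reduce discontinuity of $\mathrm{inv}$ at an arbitrary invertible $T_{0}\in\mathcal{L}(X,Y)$ to the single model point $I_{X}$, and then to exhibit one explicit sequence of invertible operators tending to $I_{X}$ in the strong operator topology whose inverses do not converge there. First I would normalize the setting: since $X$ and $Y$ are isomorphic and one of them has a Schauder basis (our standing assumption), both do, so transporting a basis of $Y$ through a fixed isomorphism $J\colon X\to Y$ I may assume $X$ itself has a Schauder basis $(e_{k})_{k=1}^{\infty}$, with partial-sum projections $S_{n}$ exactly as in the proof of Proposition~\ref{prop1}. I will use that $S_{n}\to I_{X}$ strongly and $S_{n}S_{m}=S_{\min\{n,m\}}$, so that for each $n$ the three operators $S_{n}$, $P_{n}:=S_{2n}-S_{n}$, $Q_{n}:=I_{X}-S_{2n}$ are idempotent, annihilate one another, and sum to $I_{X}$.

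Next I would carry out the reduction. Suppose I have produced invertible $W_{n}\in\mathcal{L}(X,X)$ with $W_{n}\to I_{X}$ strongly but $W_{n}^{-1}y_{0}\not\to y_{0}$ for some fixed $y_{0}\in X$. Putting $T_{n}:=T_{0}W_{n}$, each $T_{n}$ is invertible and $T_{n}\to T_{0}$ strongly because $\|T_{n}x-T_{0}x\|\le\|T_{0}\|\,\|W_{n}x-x\|\to0$ for every $x$, while $T_{n}^{-1}=W_{n}^{-1}T_{0}^{-1}$ satisfies $T_{n}^{-1}(T_{0}y_{0})=W_{n}^{-1}y_{0}\not\to y_{0}=T_{0}^{-1}(T_{0}y_{0})$. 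Since a map continuous at $T_{0}$ carries sequences converging to $T_{0}$ to convergent sequences, $\mathrm{inv}$ is discontinuous at $T_{0}$; as $T_{0}$ is arbitrary, only the construction of one such $(W_{n})$ remains, the sole point to check in the reduction being that left multiplication by the fixed operator $T_{0}$ is strongly continuous.

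For the construction I would pick $y_{0}=\sum_{k}c_{k}e_{k}\in X$ with every $c_{k}\neq0$ (say with $\sum_{k}|c_{k}|\,\|e_{k}\|<\infty$, so the series converges) and set $a_{n}:=\|P_{n}y_{0}\|$; then $a_{n}>0$ by uniqueness of the basis expansion, while $a_{n}=\|S_{2n}y_{0}-S_{n}y_{0}\|\to0$. For all $n$ with $a_{n}/n<1$ I would define
\[
W_{n}:=I_{X}-(1-\delta_{n})P_{n}=S_{n}+\delta_{n}P_{n}+Q_{n},\qquad\delta_{n}:=a_{n}/n .
\]
A direct computation using $S_{n}^{2}=S_{n}$, $P_{n}^{2}=P_{n}$, $Q_{n}^{2}=Q_{n}$ and the vanishing of all cross products shows $W_{n}$ is invertible with $W_{n}^{-1}=S_{n}+\delta_{n}^{-1}P_{n}+Q_{n}=I_{X}+(\delta_{n}^{-1}-1)P_{n}$. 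Then $\|W_{n}x-x\|=(1-\delta_{n})\|P_{n}x\|\le\|P_{n}x\|\to0$ for each $x$, so $W_{n}\to I_{X}$ strongly, whereas $\|W_{n}^{-1}y_{0}-y_{0}\|=(\delta_{n}^{-1}-1)a_{n}=n-a_{n}\to\infty$, so $W_{n}^{-1}y_{0}\not\to y_{0}$, as required.

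The structural ingredient — that the Schauder partial sums split $X$, for each $n$, into three complemented pieces on which a scalar perturbation inverts explicitly — is borrowed directly from the proof of Proposition~\ref{prop1} and is unproblematic even for a non-unconditional, possibly non-normalized basis. The only point I expect to need care is the tuning in the construction: $\delta_{n}$ must be chosen small relative to the \emph{fixed} decay rate $a_{n}$ of the tail of $y_{0}$ so that $\delta_{n}^{-1}a_{n}\to\infty$, yet the perturbation $W_{n}-I_{X}=-(1-\delta_{n})P_{n}$ must still tend to $0$ strongly — which it does automatically, being uniformly bounded with $P_{n}\to0$ strongly. Thus I do not anticipate a genuine obstacle, only a little bookkeeping.
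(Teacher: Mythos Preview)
Your proof is correct and rests on the same core idea as the paper's: rescale certain basis directions by a small factor so that the perturbed operator converges strongly to the identity while its inverse blows up. The paper's execution is simpler, however. It alters only the single coordinate $e_{n}$ by the factor $1/n$, setting
\[
I_{n}x=\sum_{k\neq n}e_{k}^{\ast}(x)\,e_{k}+\frac{1}{n}\,e_{n}^{\ast}(x)\,e_{n},
\]
and then observes $\|I_{n}^{-1}\|\geq\|I_{n}^{-1}e_{n}\|/\|e_{n}\|=n$, invoking the Banach--Steinhaus theorem to conclude that $(I_{n}^{-1})$ cannot converge strongly. Your block construction with $P_{n}=S_{2n}-S_{n}$ and the tuned scalar $\delta_{n}=a_{n}/n$ is more elaborate but buys you an explicit witness vector $y_{0}$ at which $W_{n}^{-1}y_{0}$ diverges, so no appeal to uniform boundedness is needed; the paper, by contrast, uses Banach--Steinhaus both in the reduction and in the conclusion. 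Both arguments rely on the same structural fact---that the Schauder partial-sum projections commute and split $X$ into complementary pieces on which scalar perturbations invert explicitly---so the difference is one of packaging rather than strategy, and the paper's single-coordinate version would shorten your argument considerably.
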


The assumption made in this proposition is natural; indeed, if $X$ and $Y$ are not isomorphic, then there are not any invertible operators in $\mathcal{L}(X,Y)$.

\begin{proof}[Proof of Proposition $\ref{prop2}$]
Owing to the Banach--Steinhaus theorem, it is sufficient to prove that, for an arbitrary invertible operator $T\in\mathcal{L}(X,Y)$, there exists a sequence of invertible operators $(T_{n})_{n=1}^{\infty}\subset\mathcal{L}(X,Y)$ such that $T_{n}\to T$ as $n\to\infty$ in the strong operator topology in $\mathcal{L}(X,Y)$ and that the sequence $(\|T_{n}^{-1}\|)_{n=1}^{\infty}$ is not bounded. Here, of course, $\|T_{n}^{-1}\|$ stands for the norm of the inverse operator $T_{n}^{-1}:Y\to X$. Without loss of generality, we can assume that $Y=X$ and $T=I_{X}$. Indeed, since the Banach spaces $X$ and $Y$ are isomorphic, the general case is trivially reduced to the case assumed.

Let us use the reasoning given in the proof of Proposition~\ref{prop1}. For every integer $n\geq1$, we consider the operator
\begin{equation}\label{append-f2}
I_{n}:x\mapsto\sum_{k=1}^{n-1}e_{k}^{\ast}(x)\,e_{k}+
\frac{1}{n}\,e_{n}^{\ast}(x)\,e_{n}+
\sum_{k=n+1}^{\infty}e_{k}^{\ast}(x)\,e_{k},\quad\mbox{with}\quad
x\in X.
\end{equation}
Every operator $I_{n}$ belongs to $\mathcal{L}(X,Y)$ and is invertible; moreover,
\begin{equation}\label{append-f3}
I_{n}^{-1}:x\mapsto\sum_{k=1}^{n-1}e_{k}^{\ast}(x)\,e_{k}+
n\,e_{n}^{\ast}(x)\,e_{n}+
\sum_{k=n+1}^{\infty}e_{k}^{\ast}(x)\,e_{k},\quad\mbox{with}\quad
x\in X.
\end{equation}
It follows from \eqref{append-f1} and \eqref{append-f2} that $I_{n}\to I_{X}$ as $n\to\infty$ in the strong operator topology in $\mathcal{L}(X,Y)$. However, the sequence  $(\|I_{n}^{-1}\|)_{n=1}^{\infty}$ is not bounded because
\begin{equation*}
\|I_{n}^{-1}\|\geq\frac{\|I_{n}^{-1}e_{n}\|}{\|e_{n}\|_{X}}=n
\quad\mbox{for each integer}\quad n\geq1
\end{equation*}
due to \eqref{append-f3}.
\end{proof}

Note that we cannot replace the strong operator topology with the uniform one in Propositions \ref{prop1} and~\ref{prop2}. As is known, the mapping $\mathrm{inv}$ is everywhere continuous in the uniform operator topology, and the closure of the set of all finite-dimensional operators in this topology contains only compact (hence, irreversible) operators.



\end{document}